\documentclass{article}
\usepackage{graphicx} 
\usepackage{enumitem}
\usepackage{tikz}
\usepackage{url}

\usepackage[right=4cm]{geometry}

\usepackage{amsmath,amssymb,latexsym, amsthm}
\usepackage{amsfonts}
\usepackage{fullpage}

\usepackage[colorlinks=true,citecolor=blue]{hyperref}

\newcommand{\RR}{\mathbb{R}}

\newcommand{\mr}{\mathrm}

\DeclareMathOperator{\conv}{conv}

\DeclareMathOperator{\HJ}{HJ}
\DeclareMathOperator{\dhj}{dhj}

\newtheorem{thm}{Theorem}[section]
\newtheorem{Q}[thm]{Question}

\newtheorem{con}[thm]{Conjecture}
\newtheorem{lemma}[thm]{Lemma}

\newtheorem{cor}[thm]{Corollary}

\theoremstyle{definition}

\newtheorem{remark}[thm]{Remark}

\usepackage{pgfplots}
\pgfplotsset{compat=1.15}
\usepackage{mathrsfs}
\usetikzlibrary{arrows}

\title{Polytopes with large transversal ratio}
\author{Michael Gene Dobbins\footnote{
		Department of Mathematics and Statistics, Binghamton University, NY, USA. \texttt{mdobbins@binghamton.edu}.}
	\ and   Seunghun Lee\footnote{
		Department of Mathematics, Keimyung University, Daegu, South Korea. Partially supported by the Institute for Basic Science (IBS-R029-C1). \texttt{seunghun.math@gmail.com}.} 
        }

\date{}

\usepackage[normalem]{ulem} 

\begin{document}
\maketitle
\begin{abstract}
The transversal ratio of a polytope $P$ is the minimum proportion of vertices of $P$ required to intersect each facet of $P$. The weak chromatic number of $P$ is the minimum number of colors required to color the vertices of $P$ so that no facet is monochromatic. We will construct an infinite family of $d$-polytopes for each $d\geq 5$ whose transversal ratio approaches 1 as the number of vertices grows. In particular, this implies that the weak chromatic number for $d$-polytopes is unbounded for each $d\geq 5$. 
The previous best known lower bounds on the supremum of the transversal ratio for $d$-polytopes for $d\geq 5$ were 2/5 for odd $d$ by Novik and Zheng, and 1/2 for even $d$ by Holmsen, Pach, and Tverberg. 
In the case of simplicial $(d-1)$-spheres, the best known lower bounds were 1/2 for $d=5$ and $6/11$ for $d=6$ by Novik and Zheng.
\end{abstract}

\section{Introduction}

\subsection{Statement of the main result}\label{subsec:statement}
For a hypergraph $H=(V, \mathcal{E})$, a \textit{transversal} is a subset of $V$ which intersects all the hyperedges in $\mathcal{E}$. The \textit{transversal number} of $H$, denoted by $\tau(H)$, is the minimum size of a transversal of $H$. The \textit{transversal ratio} of $H$ is the ratio $\rho(H)=\tau(H)/|V|$. 
Given a $d$-polytope $P$ on vertex set $V$, 
the hypergraph $H(P)$ associated to $P$ is 
\[H(P)=\{f\subseteq V:\textrm{$f$ is the vertex set of a facet of $P$}\}.\]

Our main result is the following. The proof is given in Section \ref{sec:proof}.

\begin{thm} \label{thm:main}
For $d\geq 5$ and $0 \leq r<1$, there is a simplicial $d$-polytope $P$ with $\rho(H(P)) \geq r$.
\end{thm}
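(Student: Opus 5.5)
The plan is to reduce to $d=5$ and to construct the $5$-polytopes using the density Hales--Jewett theorem. The paper's macros $\HJ$ and $\dhj$ suggest this is the intended tool.

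\textbf{Reduction to $d=5$.} Suppose $P$ is a simplicial $d$-polytope with $\rho(H(P))\ge r$. Take a pyramid over $P$ and then perturb to make it simplicial, or take a bipyramid. The facets of the new polytope restrict to facets of $P$ plus the apex. Thus any transversal either contains the apex or restricts to a transversal of $P$. Since we gain only one or two new vertices, the ratio is preserved asymptotically, and induction in $d$ reduces everything to the case $d=5$.

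\textbf{Combinatorial core.} By density Hales--Jewett, for fixed $k$ and $\varepsilon>0$ there is $n$ such that every subset of $[k]^n$ of density at least $\varepsilon$ contains a combinatorial line. Equivalently, any transversal of the hypergraph of combinatorial lines in $[k]^n$ has density at least $1-\varepsilon$. So the goal is a simplicial $5$-polytope $P$ whose vertex set is (essentially) indexed by $[k]^n$, such that every combinatorial line either contains the vertex set of some facet or meets a vertex set we can afford.

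More robustly, we want: every set $S$ of vertices whose complement contains no combinatorial line fails to be a transversal only if it misses some facet. Equivalently, for every combinatorial line $L$, some facet of $P$ has its vertices inside $L$. With $k$ fixed (say $k=5$ or $6$), this needs facets with at most $k$ vertices; for simplicial $5$-polytopes a facet has $5$ vertices. So I would take $k=5$ and demand that the $5$ points of every combinatorial line in $[5]^n$ span a facet.

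\textbf{Geometric realization.} This is the main obstacle. We must embed $[5]^n$ as points in general position in $\RR^5$, all in convex position, such that each line's $5$ points lie on a supporting hyperplane. I would try points on a moment-like curve with rapidly growing parameters indexed lexicographically, in the spirit of cyclic polytopes and Gale's evenness condition. Cyclic polytopes only give facets satisfying evenness, so I expect to need a hierarchical construction instead. One option is to place points so that a line, determined by its wildcard set and fixed coordinates, corresponds to a "generalized cyclic" configuration. Another is iterated stacking or lifting across the $n$ coordinates, with each coordinate contributing at a separate scale. If requiring all lines proves too much, I would fall back to requiring, for each line, a facet inside it after deleting a few vertices. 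Density Hales--Jewett still applies in the form "density $\ge\varepsilon$ implies a line," so a weaker but uniform condition suffices.

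Given the realization, the conclusion is immediate. Let $T$ be a transversal. If $|T|<(1-\varepsilon)5^n$, then the complement has density greater than $\varepsilon$ and contains a line $L$. The facet inside $L$ then misses $T$, a contradiction. Hence $\rho\ge 1-\varepsilon\ge r$.
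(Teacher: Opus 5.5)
\textbf{There is a genuine gap.} Your final deduction from density Hales--Jewett is correct and matches how the paper concludes. However, the proposal has two real problems: the dimension reduction fails, and the main construction is missing.

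\textbf{The reduction to $d=5$ does not work.} In a bipyramid over $P$ with apexes $a,b$, every facet is $F\cup\{a\}$ or $F\cup\{b\}$. So $\{a,b\}$ is already a transversal and $\tau\le 2$. For a pyramid with apex $a$, your dichotomy (a transversal either contains $a$ or restricts to a transversal of $P$) is true, but the first branch gives no bound. A transversal containing $a$ meets every cone facet $F\cup\{a\}$ at once. It then only has to pierce the simplices that replace the base after perturbation, and these are uncontrolled. For a pulling perturbation at a base vertex $v$, they all contain $v$, so again $\tau\le 2$.

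No dimension reduction is needed. Your own reason for choosing $k=5$ says that in dimension $d$ you should use the alphabet $[d]$, so that lines have exactly $d$ points; this is what the paper does. It builds the configuration in $\RR^5$ and appends $d-5$ generic coordinates. The pulled-back function $g^\epsilon_L\circ\pi$ still vanishes on the $d$ points of each line and is positive on all other points. Genericity makes those $d$ points affinely independent, so each line spans a $(d-1)$-simplex facet.

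\textbf{The geometric realization is not supplied.} Realizing all combinatorial lines as facets is the actual content of the theorem. You leave it as a list of possible strategies (moment-like curves, generalized cyclic configurations, iterated lifting), none of which is carried out or shown to work. The missing idea in the paper is to make lines collinear first and then lift.
\begin{itemize}
\item Map $\sigma\mapsto\sum_i\sigma_iv_i\in\RR^2$ with generic $v_i$ having positive $x$-components. Each combinatorial line then lies on a planar line $y=a_Lx+b_L$ containing no other points of the configuration.
\item Apply the Veronese map $\nu(x,y)=(x^2,xy,y^2,x,y)$. The polynomial $(y-a_Lx-b_L)^2$ becomes an affine function that vanishes on the image of the line and is positive on the rest. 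So each line gives a face, though a degenerate one.
\item Apply the shift $(x,y)\mapsto(x,y+\sqrt{\epsilon x})$, which does not depend on $L$. It moves the points of every line simultaneously onto the parabola $(y-a_Lx-b_L)^2=\epsilon x$. Five points on this parabola determine a unique conic, so their images under $\nu$ are affinely independent. For small $\epsilon$ the supporting property survives.
\item A final generic perturbation makes the polytope simplicial while keeping these facets.
\end{itemize}
Without a construction of this kind, your argument does not establish the theorem.
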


This polytope is very large, with a number of vertices that grows roughly like the Ackermann function.
 
For a hypergraph $H=(V,\mathcal{E})$, a \textit{(weak) $k$-coloring} of $H$ is a coloring $c: V \to [k]$ so that every hyperedge in $\mathcal{E}$ attains at least two colors, where $[k]=\{1,2,\dots, k\}$. The \textit{(weak) chromatic number}, denoted by $\chi(H)$, is the minimum integer $k$ such that $H$ is $k$-colorable. Throughout the rest of the paper we omit the word ``weak'' when referring to coloring. 

Note that when $\chi(H)\leq k$, we have $\rho(H)\leq \frac{k-1}{k}$ by choosing $k-1$ color classes of smaller size. Thus Theorem \ref{thm:main} implies:

\begin{cor}
For $d\geq 5$ and a positive integer $N$, there is a simplicial $d$-polytope $P$ with $\chi(H(P)) \geq N$.
\end{cor}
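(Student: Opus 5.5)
The corollary follows directly from Theorem~\ref{thm:main} together with the observation made just before the statement. Fix $d\geq 5$ and a positive integer $N$. If $N=1$ there is nothing to prove, so assume $N\geq 2$ and put $r=\frac{N-1}{N}$. Since $0\leq r<1$, Theorem~\ref{thm:main} gives a simplicial $d$-polytope $P$ with $\rho(H(P))\geq \frac{N-1}{N}$. We will show $\chi(H(P))\geq N$ by contradiction, comparing $\rho$ with the bound a small coloring would force.

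Suppose $\chi(H(P))\leq N-1$, and let $k=\chi(H(P))$. Take a weak $k$-coloring $c$ of $H(P)$ on vertex set $V$, and let $C$ be a largest color class, so $|C|\geq |V|/k$. The set $T=V\setminus C$ is the union of the remaining $k-1$ color classes. Every facet attains at least two colors, so no facet lies inside $C$. Hence every facet meets $T$, and $T$ is a transversal. This gives
\[
\tau(H(P))\leq |V|-\frac{|V|}{k}, \qquad\text{so}\qquad \rho(H(P))\leq \frac{k-1}{k}\leq \frac{N-2}{N-1}<\frac{N-1}{N}.
\]
The middle inequality uses that $x\mapsto (x-1)/x$ is increasing. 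This contradicts $\rho(H(P))\geq\frac{N-1}{N}$, so $\chi(H(P))\geq N$.

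The only substantive ingredient is Theorem~\ref{thm:main}, which we take as given. The one detail to handle is the small case $k=1$. For $H(P)$ to be 1-colorable, no facet may be monochromatic; but any facet of a $d$-polytope with $d\geq 1$ is nonempty, so a single color makes it monochromatic. Thus $k\geq 2$, and in any case the bound $\rho\leq (k-1)/k$ remains valid, since $k=1$ would give $\rho\leq 0$, an even stronger contradiction. There is no real obstacle here.
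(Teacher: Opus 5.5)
Your proposal is correct and follows essentially the same route as the paper: a weak $k$-coloring yields a transversal made of the $k-1$ smaller color classes, so $\chi(H)\leq k$ forces $\rho(H)\leq \frac{k-1}{k}$. Combined with Theorem~\ref{thm:main} at $r=\frac{N-1}{N}$, this gives the corollary. Your explicit handling of the small cases is a harmless addition.
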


\medskip 

\subsection{Ingredients} \label{subsec:ingredients}

The main ingredients of our construction are two-fold. One is the celebrated \textit{density Hales-Jewett theorem} by Furstenberg and Katznelson (see Theorem \ref{thm:density_Hales-Jewett}).  This is a fundamental result in Ramsey Theory which says that a large enough subset of a combinatorial cube must contain a combinatorial line \cite{density-Hales-Jewett_original} (see also \cite{density-Hales-Jewett_annals, density-Hales-Jewett_removal, density-Hales-Jewett_simple, Hales_Jewett_reasonable}). 
The other is the construction of a 5-polytope with a facet corresponding to each combinatorial line. 

The classical Hales-Jewett theorem \cite{hales-jewett} was used by 
Lee and Nevo in a similar way to show that the chromatic number of PL embeddable hypergraphs is unbounded \cite{lee_nevo2023colorings}. 
A limitation of their approach was that it involved extending a PL-embedding of a system of combinatorial lines to a PL sphere.  
This can be done using the extension theorem by Adiprasito and Pat\'akov\'a \cite{adiprasito-patakova2024higherdimensionalversionfarystheorem},\footnote{We also learned a proof of this from Lorenzo Venturello
and Geva Yashfe (personal communication).}
however, the extension requires additional vertices whose number cannot be controlled effectively. Consequently, the main result of \cite{lee_nevo2023colorings} has no immediate implication for the transversal ratios of polytopes, or even for that of simplicial spheres. Here we embed the combinatorial lines as facets of a 5-polytope without any additional vertices.

Our polytope construction involves embedding combinatorial lines on conics. 
Conics have also been used in other polytope constructions.  For example, Jürgen Richter-Gebert gave a new proof using conics that a 5-polytope constructed by Günter Ziegler has a non-prescribable 2-face \cite[6.5.(c)]{lectures_on_polytopes_book}, and obtained a \textit{non-Steinitz theorem}  in dimension 5 \cite[Example 3.4.3 and Chapter 15]{realization_space_polytope}.

\subsection{Background}\label{subsec:background}
Transversal number and chromatic number are fundamental concepts in combinatorics. Not only for abstract graphs and hypergraphs, those concepts have been also studied in connection to geometry; the examples include, but are not restricted to, $(p,q)$-theorems of combinatorial convexity (see \cite{discrete_geometry_matousek_book, combinatorial_convexity_barany, helly_type_survey_holmsen_wenger_handbook, helly_survey_amenta_de_Loera_soberon, helly_survey_barany_kalai} and references therein), stable sets and chromatic numbers of the graph of a flag sphere \cite{nevo_stable_flag, newman2023chromaticnumbersflag3spheres},  independence ratio of a graph with bounded genus \cite{genus_cutting}, coloring and choosability results for the face-hypergraph of a graph embedded in a surface \cite{coloring_surface_jctb, coloring_surface_answer, choosability_genus, Thomassen_list_coloring_2-sphere}, chromatic number of a triangulable $d$-dimensional manifold \cite{coloring_surface_lutz}, uniform hypergraphs embeddable in $\RR^d$ \cite{coloring_d-embeddable, lee_nevo2023colorings} and transversal ratio of stacked spheres \cite{cho2023transversal}. 

To the best of our knowledge, the earliest direct motivation to investigate transversal number of polytopes comes from the \textit{surrounding property} by Holmsen, Pach and Tverberg \cite{holmsen_surrounding}. Given a finite point set $P \subseteq \RR^d$ in general position with respect to the origin $O$, we say $P$ satisfies property $S(k)$, if any $k$-element subset of $P$ can be extended to a $(d+1)$-element subset of $P$ so that $O\in \conv(P)$. 
Applying the Gale transform to $P$, we obtain a dual point set $P^*$ in $\RR^{n-d-1}$.  There is an equivalent dual property $S^*(k)$ for $P^*$: among any $n-k$ points of $P^*$, there are some $n-d-1$ that form a facet of $\conv(P^*)$. When $P^*$ is in convex position, it means that the hypergraph $H(\conv(P^*))$ does not have a transversal of size $k$. By using the duality and considering transversal number of cyclic $d^*$-polytopes which is $\lceil(n-d^*)/2\rceil+1$  for even $d^*=n-d-1$, they obtained infinitely many point sets with $S(\lfloor d/2 \rfloor +1)$. 

This motivates studies of transversal ratio of polytopes, or simplicial spheres even more generally. The following question was originally asked by Andreas Holmsen in a personal communication.  Isabella Novik and Hailun Zheng \cite{novik_transversal_open_problem} also asked a similar, likely equivalent, question, see Subsection \ref{subsec:implication}. 
Let $H(\Delta)$ be the hypergraph consisting of the facets of a polytopal complex $\Delta$.
\begin{Q}\label{question_main}
For $d \geq 3$, let
\begin{align*}
    \rho_d^\mr{P}&:=\sup\{\rho(H(Q)):\textrm{$Q$ is a simplicial $d$-polytope}\}\textrm{, and}\\
    \rho_d^\mr{S}&:=\sup\{\rho(H(\Delta)):\textrm{$\Delta$ is a simplicial $(d-1)$-sphere}\}.
\end{align*}
What are the values of $\rho_d^\mr{P}$ and $\rho_d^\mr{S}$?
\end{Q}
In \cite{transversals_spheres_joseph_michael}, it was shown that $\rho_3^\mr{P}=\rho_3^\mr{S}=1/2$, and that $\rho_4^\mr{S}\geq 11/21$ where the constructions were obtained experimentally, and have larger transversal ratio than $1/2$, which is an upper bound for cyclic polytopes. Later, Novik and Zheng introduced novel constructions for Question \ref{question_main} which replaced the previous records for $d \geq 4$
\cite{novik_transversal_open_problem, novik2024transversalnumberssimplicialpolytopes}. They showed $\rho_4^\mr{S} \geq 4/7$, $\rho_5^\mr{S} \geq 1/2$, $\rho_6^\mr{S} \geq 6/11$, and $\rho_d^\mr{P} \geq 2/5$ for every odd $d\geq 5$.

\medskip 

\subsection{Implications and future directions} \label{subsec:implication}

\textbf{On Question \ref{question_main}.} Theorem \ref{thm:main} implies that $\rho_d^\mr{P}=\rho_d^\mr{S}=1$ for $d\geq 5$. This is rather surprising when we compare this with previous constructions for Question \ref{question_main}; many of them were obtained by utilizing the moment curve.
This choice looks natural; on one hand neighborly polytopes, including cyclic polytopes, have the largest number of facets so it is reasonable to expect to require more vertices to pierce all the facets. On the other hand, geometry on the moment curve can be described in a purely combinatorial way which is easy to handle.
When we use the moment curve however, as seen from the list of facets of a cyclic polytope, some parity issue on dimensions appears. Hence it was relatively easier to obtain constructions of large transversal ratio for even dimensional polytopes (or odd dimensional simplicial spheres) than for the others. Since the smallest dimension of our constructed polytopes is 5, this looks quite different from the previous ones in this aspect, and might suggest another way of constructing polytopes with large face numbers. 

An obvious open question remains for $d=4$ in Question \ref{question_main}. Our result and 3-dimensional spheres having unbounded chromatic number \cite{lee_nevo2023colorings} suggests the following conjecture.
\begin{con}
There are 4-polytopes requiring all but a vanishingly small portion of vertices for a transversal, 
i.e., 
$\rho_4^\mr{P}=\rho_4^\mr{S}=1$.
\end{con}

\begin{remark} \label{remark_eran}
    Our construction can be slightly modified to give the following conclusion: \textit{For $k$ and $d$ with $k\geq d$ and a given $d$-polytope $P$, let $\rho_k(P)$ be the proportion of vertices of $P$ needed to pierce all facets of $P$ with at least $k$ vertices each. For any $0\leq r<1$, there is a $d$-polytope $P$ with $\rho_k(P)\geq r$.} This motivates the following question.
\end{remark}
\begin{Q} \label{question_eran}
  Is there a $d$-polytope with no simplical facets and large transversal ratio?
\end{Q}

Note that the chromatic number of such a polytope must be unbounded, 
so the chromatic number of the graph of the polytope must also be unbounded.

\medskip 

\textbf{Chromatic number of geometrically embeddable hypergraphs and simplicial manifolds.} We say that a $k$-uniform hypergraph $H=(V,\mathcal{E})$ for $k\leq d+1$ is \textit{geometrically embeddable} in $\RR^d$ if there is an injection $\iota: V \to \RR^d$ such that for every hyperedge $h$ of $H$, $\conv(\iota(h))$ is a $(k-1)$-simplex, and for two distinct hyperedges $h_1$ and $h_2$, 
\[\conv(\iota(h_1))\cap \conv(\iota(h_1))   =\conv(\iota(h_1)\cap \iota(h_1)).\]
In \cite{coloring_d-embeddable}, the authors investigated the (weak) chromatic number of $k$-uniform hypergraphs geometrically embeddable in $\RR^d$ where $k$ and $d$ vary. This was an attempt for a generalization of the four color theorem in higher dimensions.

A particular question is whether the chromatic number is unbounded when the number of vertices grows to infinity, especially when $k=d+1$, see \cite[Section 4]{coloring_d-embeddable}. In \cite{lee_nevo2023colorings}, utilizing the hypergraphs by Ackerman, Keszegh and P\'alv\"olgyi \cite{ABAB_stabbed_pseudodisk} extended from earlier constructions in \cite{ABABA_construction_pach, abstract_polychromatic}, it was shown that when $k\leq d$, the chromatic number is unbounded. Also in the same paper, when $k=d+1$, a family of geometrically embeddable hypergraphs with chromatic number at least 3 were constructed for odd $d$. Here, a similar parity issue appears since the constructions were based on the moment curve.

By using Schlegel diagrams of our constructions, we obtain the following corollary.
\begin{cor}\label{cor:geometrically_embeddable}
For $d\geq 4$, there is an infinite family of $(d+1)$-uniform hypergraphs, which are geometrically embeddable in $\RR^d$, with unbounded (weak) chromatic number. 
\end{cor}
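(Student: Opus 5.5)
The plan is to obtain the hypergraphs directly from Theorem~\ref{thm:main} via Schlegel diagrams. Fix $d\geq 4$ and put $D=d+1\geq 5$. For each $N$, Theorem~\ref{thm:main} together with the first corollary gives a simplicial $D$-polytope $P_N$ with $\chi(H(P_N))\geq N$. Every facet of $P_N$ is a $(D-1)$-simplex with $D=d+1$ vertices, so $H(P_N)$ is $(d+1)$-uniform. We will geometrically embed $H(P_N)$, or a subhypergraph of it that still has chromatic number at least $N-1$, into $\RR^d$.

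\textbf{Schlegel diagram.} Choose a facet $F_0$ of $P_N$ and a point $x$ just beyond $F_0$, that is, beyond $F_0$ and beneath all other facets. Project radially from $x$ onto the affine hull of $F_0$, which is a copy of $\RR^d$. Standard Schlegel diagram theory says that the images of the facets other than $F_0$ form a polytopal subdivision of $F_0$. Because $P_N$ is simplicial, this subdivision is a geometric simplicial complex. The projection is injective on vertices and affine on each face not containing $x$, so each facet $F\neq F_0$ maps onto a $d$-simplex $\conv(\iota(F))$. Moreover, any two such simplices meet exactly in the image of their common face, which gives the required identity $\conv(\iota(h_1))\cap\conv(\iota(h_2))=\conv(\iota(h_1)\cap\iota(h_2))$. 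Hence the hypergraph $H'_N=H(P_N)\setminus\{F_0\}$ is geometrically embeddable in $\RR^d$.

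\textbf{Chromatic number.} Removing one hyperedge lowers the chromatic number by at most one. Indeed, take a proper coloring of $H'_N$ with $k$ colors and give one vertex of $F_0$ a new $(k+1)$-st color. Every other hyperedge then still sees at least two colors, since either its coloring is unchanged or it now contains two distinct colors. Also, $F_0$ becomes non-monochromatic, because $|F_0|\geq 2$ and only one of its vertices received the new color. Thus $\chi(H'_N)\geq N-1$, which is unbounded as $N\to\infty$. This yields an infinite family, since distinct values of $\chi$ force distinct hypergraphs.

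The only substantive point is the Schlegel diagram step, namely that the projection gives a genuine geometric simplicial complex with the correct intersection property. This is classical (see, e.g., Ziegler's \emph{Lectures on Polytopes}), so I expect no real obstacle. The minor care needed is to check that vertices of $F_0$ are fixed by the projection and that no two vertices collide. Both hold because $x$ lies beneath every facet other than $F_0$.
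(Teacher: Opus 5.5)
Your proposal is correct and follows the paper's route: take a Schlegel diagram of the simplicial $(d+1)$-polytopes from Theorem~\ref{thm:main} and discard the base facet $F_0$, which costs at most one color. One small imprecision is that central projection from $x$ is a projective map on each face, not an affine one. Since it is admissible on $P_N$, it still sends each facet onto the convex hull of its projected vertices, so your conclusion stands.
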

However our technique shows limitation for $\RR^3$. In fact, it does not look very clear if we can embed the combinatorial lines of a large combinatorial cube in $\RR^3$. Thus we propose the following question (see the Hales-Jewett hypergraph in Subsection \ref{subsec:density_HJ}).
\begin{Q} \label{question:HJ_embedding_R3}
Can we geometrically embed the Hales-Jewett hypergraph $\HJ(4,n)$ into $\RR^3$ for every positive integer $n$? 
\end{Q}

Our construction also gives a simpler construction of simplicial $d$-manifolds with unbounded chromatic numbers than the one given in \cite{lee_nevo2023colorings} as an extension of the previous study by Lutz and M{\o}ller \cite{coloring_surface_lutz}: one simply take the connected sum via facets of a given simplicial manifold and our construction.

\medskip

\textbf{Asymptotics of chromatic number and transversal ratio of simplicial spheres.} Since we can have arbitrarily large chromatic number and transversal ratio for $d$-polytopes for $d \geq 5$, it is reasonable to ask the following question:
\begin{Q} \label{question:growth}
For $d \geq 4$, let
\begin{align*}
    \rho_d^\mr{P}(n)&:= \max\{\rho(H(Q)):\textrm{$Q$ is a simplicial $d$-polytope on $n$ vertices}\}\textrm{, and}\\
    \rho_d^\mr{S}(n)&:= \max\{\rho(H(\Delta)):\textrm{$\Delta$ is a simplicial $(d-1)$-sphere on $n$ vertices}\}.
\end{align*} 
What is the asymptotic behavior of the functions 
$\rho_d^\mr{P}(n)$ and $\rho_d^\mr{S}(n)$? 
In particular, for $d \geq 5$, how quickly do $\rho_d^\mr{P}(n)$ and $\rho_d^\mr{S}(n)$ approach 1? 
\end{Q}

Novik and Zheng asked, what are the values of $\limsup_{n\to \infty}\rho_d^\mr{P}(n)$ and $\limsup_{n\to \infty}\rho_d^\mr{S}(n)$ \cite{novik_transversal_open_problem}.  
Their question is resolved for $d\geq 5$ by Theorem \ref{thm:main}, and remains open for $d=4$.
Question \ref{question:growth} is analogous to the question asked in \cite{coloring_d-embeddable} for geometrically embeddable hypergraphs; they not only asked for the asymptotic behavior, but asked also for the actual chromatic number as a function on $n$. Aside from the case of $d=4$, this looks like the natural next step to consider for higher dimensions.

In \cite{transversals_spheres_joseph_michael}, an upper bound $O(n^{(\lfloor d/2\rfloor-1)/(d-1)})$ for a chromatic number of a simplicial $(d-1)$-sphere on $n$ vertices was obtained. Later in \cite{novik2024transversalnumberssimplicialpolytopes}, Novik and Zheng obtained an upper bound of $n+1-\frac{1}{e}nm^{-1/d}$ on transversal number generally for pure $(d-1)$-dimensional complexes for $d \geq 2$ and $n$ sufficiently large 
where $m$ is the number of facets \cite[Theorem 3.1]{novik2024transversalnumberssimplicialpolytopes} as well as, a lower bound construction of pure $(d-1)$-dimensional complexes with $n$ vertices and $\Theta(m)$ facets whose transversal number is  $n-\Theta(n^{d/(d-1)}m^{-1/(d-1)})$ for $n$ and $m$ such that $1 \ll n \ll m \ll n^{(d+1)/2}$.  
In fact, the argument in  \cite[Lemma 6.3]{transversals_spheres_joseph_michael} used to obtain the upper bound on chromatic number can be slightly modified to obtain a tight upper bound which matches the lower bound by Novik and Zheng.
In particular for simplicial spheres, this gives an upper bound of $n-\Omega(n^{\lceil d/2\rceil/(d-1)})$ on the transveral number, which gives an upper bound of
$1-1/O(n^{(\lfloor d/2\rfloor-1)/(d-1)})$ 
on $\rho_d^\mr{P}(n)$ and $\rho_d^\mr{S}(n)$,  
by the upper bound theorems for polytopes and simplicial spheres by McMullen \cite{macmahon_counting_book} and Stanley \cite{upper_bound_stanley}, respectively.

This upper bound is very far from the lower bound that we expect from our construction. 
For $d=5$, 
we expect our construction to give a lower bound similar to 
$1-1/\Omega(\log_2^{***}(n))$\footnote{
$\log_b^*(n)$ is the number of times $\log_b$ must be applied to $n$ to obtain a value that is at most 1 
and $\log_b^{**}(n)$ is defined analogously for $\log_b^*$ et cetera. 
} 
compared to the upper bound of $1-1/O(n^{1/4})$.  
Specifically, we have $\rho_d^\mr{P}(n) \geq 1-\dhj_d^{-1}(\log_d(n))$ 
where $\dhj_d^{-1}(y) = \inf\{\varepsilon>0 : \dhj(d,\varepsilon) \leq y \}$ 
and $\dhj(d,\varepsilon)$ is the minimum quantity satisfying the conclusion of the density Hales-Jewett theorem (see Theorem \ref{thm:density_Hales-Jewett}). 
As both $d$ and $n$ grow, $\dhj_d^{-1}(n)$ is known to tend to 0 roughly at least as fast as the reciprocal of the inverse Ackermann function, 
but precise bounds for the density Hales-Jewett theorem have not been given \cite[Theorem 1.5]{density-Hales-Jewett_annals}.

\medskip

\textbf{On surrounding property.} 
Holmsen, Pach, and Tverberg 
asked for an arbitrarily large point set $P$ in $\RR^d$ in general position  with respect to the origin that satisfies the surrounding property $S(k)$ for large values of $k$ when the dimension $d$ is fixed.  Equivalently, they asked for a point set $P^*$ in $\RR^{d^*}=\RR^{n-d-1}$ in general position 
with convex hull having transversal number more than $k$ \cite[Problems 3 and 7]{holmsen_surrounding}. 
Theorem \ref{thm:main} gives a partial answer for this question.

\begin{cor}
    For any $0\leq r < 1$, there is a sufficiently large dimension $d$ and a point set $P$ in general position with respect to the origin in $\RR^d$ satisfying $S(rd)$.
\end{cor}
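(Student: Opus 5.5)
Apply Gale duality to the polytope of Theorem \ref{thm:main}, using the correspondence between $S(k)$ and $S^*(k)$ from Subsection \ref{subsec:background}. Given $0\le r<1$, pick $r'$ with $r<r'<1$ and apply Theorem \ref{thm:main} with $d^*=5$ and ratio $r'$. This gives a simplicial $5$-polytope $Q$ on $n$ vertices with $\tau(H(Q))\ge r'n$. If needed, perturb the vertices of $Q$ slightly so they are in general position. Since $Q$ is simplicial, small perturbations keep the face lattice, so the facet hypergraph and transversal number are unchanged. Set $P^*=\mathrm{vert}(Q)\subseteq\RR^{d^*}$, so $P^*$ is in convex general position and $H(\conv(P^*))$ has no transversal of size $k$ for any $k<r'n$.

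Next, take the Gale transform. Set $d=n-d^*-1=n-6$, and let $P\subseteq\RR^d$ be a Gale dual of $P^*$ (after the standard homogenization of $P^*$ to vectors in $\RR^{d^*+1}$). The facts to use are:
\begin{itemize}
\item $P^*$ in general position corresponds to $P$ being in general position with respect to $O$;
\item a set $F$ of $d^*$ points of $P^*$ is a facet of $\conv(P^*)$ exactly when $O$ lies in the interior of $\conv$ of the complementary $n-d^*=d+1$ points of $P$.
\end{itemize}
Under this, a $k$-subset $K\subseteq P$ extends to a $(d+1)$-subset surrounding $O$ exactly when some facet of $\conv(P^*)$ avoids the corresponding points. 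That is, $K$ is not a transversal of $H(\conv(P^*))$. This is the equivalence $S(k)\Leftrightarrow S^*(k)$ quoted from Holmsen, Pach and Tverberg. Since no transversal has size below $r'n$, $P$ satisfies $S(k)$ for every $k<r'n$.

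Finally, compare with $d$. We have $d=n-6<n$, and $r'n>rd+1$ once $n$ is large, which holds since $r'>r$. The polytope from Theorem \ref{thm:main} can be taken with arbitrarily many vertices, for instance by applying the theorem with ratio $\max(r',1-1/m)$ for growing $m$. Hence $P$ satisfies $S(\lfloor rd\rfloor)$, and monotonicity of $S(k)$ in $k$ covers any rounding. This also shows $d$ can be taken arbitrarily large.

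The main obstacle is the careful statement of Gale duality for the unsigned/affine setting, in particular checking that general position transfers correctly. I would handle it by citing the standard Gale transform correspondence between cofaces and positive circuits (as in \cite{lectures_on_polytopes_book}), using that $Q$'s vertices are in general position after perturbation.
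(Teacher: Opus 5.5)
Your proposal is correct and follows the route the paper intends; the paper states this corollary without a written proof and relies on the Gale-duality discussion in Subsection~\ref{subsec:background}. You Gale-dualize a polytope from Theorem~\ref{thm:main} with $d^*=5$, identify facets of $\conv(P^*)$ with $(d+1)$-subsets of $P$ surrounding $O$ to get $S(k)$ for all $k<\tau(H(\conv(P^*)))$, and use $d=n-6<n$ to conclude $\tau\ge r'n>rd$; your extra details (transfer of general position, forcing $n$ to be large, and handling the rounding of $rd$) are all sound.
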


Even though Theorem \ref{thm:main} sheds some light on the problem on surrounding property, in fact our setting does not exactly fit into the problem since the primal dimension $d$ is fixed in \cite[Problems 3 and 7]{holmsen_surrounding} not the dual dimension $d^*$ for the space for transversal ratio. Rather, it is more about polytopes with ``small'' number of vertices in the following sense. The following problem seems to have a different flavor and has interesting features on its own right.
\begin{Q}
For a fixed positive integer $d$, what is the maximum transversal number of $H(P)$ for a simplicial $d^*$-polytope $P$ on $d^*+d+1$ vertices?
\end{Q}

\section{Proof of Theorem \ref{thm:main}} \label{sec:proof}

\subsection{Density Hales-Jewett theorem}
\label{subsec:density_HJ}
For positive integers $d$ and $n$, we define the hypergraph $\HJ(d,n)$ as follows. 
A \textit{combinatorial line} is a set of $n$ words in $[d]^n$ where the value at each position is either the same for each word or ranges over $[d]$ in a fixed order. 
That is, using an additional auxiliary element $*$, let $\tau=(\tau_1, \dots, \tau_n) \in ([d]\cup \{*\})^n$ be a sequence where $*$ appears at least once, or equivalently $\tau \in ([d]\cup \{*\})^n \setminus [d]^n$. 
For $k\in \RR$, let $\sigma(\tau,k)$ be the word obtained by substituting each instance of $*$ in $\tau$ with the value $k$, that is,
\begin{align*}
\sigma(\tau,k) &= (\sigma_1,\dots,\sigma_n) \quad \text{where}\quad  
\sigma_i = \begin{cases}
 k & \textrm{if $\tau_i=*$, and} \\
 \tau_i & \text{otherwise.}
\end{cases}
\end{align*}
The \textit{combinatorial line} $L_\tau$ 
is the set of all words obtained in this way for $k \in [d]$ from the sequence $\tau$, that is, 
\[L_\tau = \left\{\sigma(\tau,k) : k \in [d]\right\}.\]
The \textit{Hales-Jewett hypergraph}, $\HJ(d,n)$, is the hypergraph on ground set $[d]^n$ 
consisting of all combinatorial lines. 
That is,
\[\HJ(d,n)= \left([d]^n, \left\{L_\tau: \tau \in ([d]\cup \{*\})^n\setminus [d]^n \right\}\right).\]
We state the \textit{density Hales-Jewett theorem} using $\HJ(d,n)$ and  transversal ratio $\rho(\cdot)$. This celebrated theorem is a fundamental result of Ramsey Theory due to  Furstenberg and Katznelson \cite{density-Hales-Jewett_original}, and has several different proofs \cite{density-Hales-Jewett_annals, density-Hales-Jewett_removal, density-Hales-Jewett_simple}. 

\begin{thm}[Density Hales-Jewett Theorem \cite{density-Hales-Jewett_original}] \label{thm:density_Hales-Jewett}

The transversal ratio of $\HJ(d,n)$ approaches 1 as $n$ grows. 
That is, for every integer $d\geq 2$ and every $\varepsilon>0$, there is an integer $\dhj(d,\varepsilon)$ such that $\rho(\HJ(d,n))>1-\varepsilon$ for $n \geq \dhj(d,\varepsilon)$.

\end{thm}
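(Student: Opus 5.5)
The plan is to derive the statement from the standard density formulation of Furstenberg and Katznelson, rather than to reprove the Ramsey-theoretic core. That formulation says: for every $d\geq 2$ and $\varepsilon>0$ there is $N$ such that every $A\subseteq[d]^N$ with $|A|\geq\varepsilon d^N$ contains a combinatorial line $L_\tau$. The only work is translating between ``dense line-free sets'' and ``transversals of $\HJ(d,n)$'', and checking that the threshold behaves monotonically in $n$.

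First, the translation. Let $T\subseteq[d]^n$ be a transversal of $\HJ(d,n)$ and put $A=[d]^n\setminus T$. Every line $L_\tau$ meets $T$, so no line is contained in $A$; conversely, the complement of any line-free set is a transversal. Hence
\[
\tau(\HJ(d,n)) = d^n-\max\{|A| : A\subseteq[d]^n \text{ contains no combinatorial line}\}.
\]
So $\rho(\HJ(d,n))>1-\varepsilon$ holds exactly when every line-free set has size less than $\varepsilon d^n$. This is precisely the density formulation at dimension $n$.

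Second, extending from one $N$ to all $n\geq N$. I would take $\dhj(d,\varepsilon)=N$ from the density formulation and argue by averaging. Split $[d]^n=[d]^N\times[d]^{n-N}$. For each suffix $s\in[d]^{n-N}$, the fiber $[d]^N\times\{s\}$ is a copy of $[d]^N$. Any line in that copy, obtained by placing $*$'s only in the first $N$ coordinates and fixing the rest to $s$, is a combinatorial line of $[d]^n$. If $|A|\geq\varepsilon d^n$, then some fiber satisfies $|A\cap([d]^N\times\{s\})|\geq\varepsilon d^N$, so $A$ contains a line there. Thus the conclusion holds for all $n\geq\dhj(d,\varepsilon)$. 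Combined with the first step, this gives $\rho(\HJ(d,n))>1-\varepsilon$, and the strict inequality is exactly what the density statement provides.

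The main obstacle is of course the density statement itself, which is deep; the paper treats it as a cited result \cite{density-Hales-Jewett_original}. If a self-contained argument were wanted, I would follow the Polymath approach \cite{density-Hales-Jewett_annals}. That is a density-increment argument: a line-free set of density $\delta$ has density at least $\delta+c(\delta)$ on some combinatorial subspace of dimension tending to infinity with $n$, found via equal-slices measure and a ``$d-1$ implies $d$'' induction on the alphabet size. Iterating the increment finishes the proof, since density cannot exceed $1$. Here I would simply invoke the cited theorem.
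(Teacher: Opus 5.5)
Your proposal is correct and matches the paper, which also gives no proof of this statement and simply cites it from Furstenberg and Katznelson. Your complementation identity $\tau(\HJ(d,n)) = d^n - \max\{|A| : A \text{ line-free}\}$ and your fiber-averaging step spell out the routine translation from the usual density formulation (including monotonicity in $n$) that the paper leaves implicit.
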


By Theorem \ref{thm:density_Hales-Jewett}, it is sufficient to prove the following theorem in order to obtain Theorem \ref{thm:main}.

\begin{thm} \label{thm:realization_polytope}
For every positive integers $d \geq 5$ and $n$, there is a $d$-dimensional simplicial polytope $Q$ and a bijection $\varphi: [d]^n \to V(Q)$ 
such that for any combinatorial line $L$ in $\HJ(d,n)$, the image $\varphi(L)$ is the vertex set of some facet of $Q$. 
\end{thm}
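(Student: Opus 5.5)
Each line $L_\tau$ has exactly $d$ words, so a simplicial facet of a $d$-polytope has the right number of vertices. The plan is to place the $d^n$ words on a curve in a way that makes every combinatorial line a set of $d$ points whose affine hull is a hyperplane supporting the whole configuration. Then we perturb slightly to a simplicial polytope. Perturbing the vertex positions generically keeps each such supporting hyperplane a facet hyperplane in the limit sense. In fact it is cleaner to make the construction already in general position, so that each line spans a hyperplane with all other points strictly on one side.

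\textbf{Construction.} I would first handle $d=5$ and then pass to higher $d$. For the passage, note that $[d]^n$ lines have $d$ points, so the base construction really should be done directly in dimension $d$. A more uniform plan is as follows. Following the hint in the introduction, place the points of $L_\tau$ on a conic $C_\tau$ lying in a $2$-plane $\Pi_\tau$. Encode a word $w\in[d]^n$ by a point $p(w)=\sum_i \varepsilon^{i} u(w_i)$, where $\varepsilon>0$ is small and $u:[d]\to\RR^d$ is a suitably chosen map, for instance $u(k)$ on a moment-like curve. The points of a line $L_\tau$ are then
\[ p(\sigma(\tau,k)) = c_\tau + s_\tau(\varepsilon)\,u(k), \qquad s_\tau(\varepsilon)=\sum_{i:\tau_i=*}\varepsilon^i , \]
where $c_\tau$ collects the fixed coordinates. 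So each line is a translate and scaling of the same $d$-point configuration $U=\{u(1),\dots,u(d)\}$. This is the key structural fact: all lines are homothetic copies of $U$. I would choose $U$ to be the vertex set of a simplex-like set whose affine span is a hyperplane. That is, take $u(k)$ in general position in an affine hyperplane $\{x : \langle a, x\rangle = 1\}$, e.g.\ on a conic or a moment curve inside it. The homothety scales this to the hyperplane $H_\tau=\{\langle a,x\rangle = \langle a,c_\tau\rangle + s_\tau\}$.

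\textbf{Facet verification.} We need every other word $w\notin L_\tau$ to lie strictly on one side of $H_\tau$. A single linear functional $a$ cannot do this, since all $H_\tau$ are parallel. Hence $u$ must be nonlinear: replace $p$ by a map where the word's digit sequence is fed through a strictly convex "lifting" with a rapidly decaying hierarchy of scales $\varepsilon^i$. The inequality to check for $w$ versus $\tau$ is decided lexicographically at the first index $i$ where $w_i$ disagrees with the pattern of $\tau$, i.e.\ $w_i\neq\tau_i$ for fixed $\tau_i$, or $w_i\neq w_j$ for two $*$ positions. The dominant term $\varepsilon^i$ then controls the sign, and the lower-order terms are negligible when $\varepsilon$ is small. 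So the core lemma is a one-position statement: there is a map $u:[d]\to\RR^d$ and, for each "local type" (fixed digit $t$, or $*$ coupled to the common value), a linear functional whose restriction to $u([d])$ vanishes exactly on the right points and has a fixed sign elsewhere. The conic enters because the $*$ positions force, across different $k$, a quadratic relation in the lift. I would realise $u(k)=(k,k^2,\dots)$-type coordinates so that a quadratic form vanishes on the relevant pairs, and consistency between two $*$ positions $i<j$ becomes a conic condition. This is exactly where dimension $5$ is needed: two conic parameters plus the hyperplane normalisation. For $d>5$ the remaining coordinates are used for the extra points of $U$, or we take a pyramid/perturbation argument.

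\textbf{Main obstacle.} The main difficulty is the local sign lemma. We need a single placement of $[d]$ for which every combinatorial-line hyperplane is supporting, with the correct strict sign for every off-line word regardless of which position first breaks the pattern, including mixed breaks among $*$ positions. I expect to spend most effort choosing the conic and the hierarchy of scales so that the first-difference term always has the same sign. This mirrors the way cyclic polytopes verify Gale's evenness, but with a sign that is uniform rather than parity-dependent. Finally, a small generic perturbation makes $Q$ simplicial without destroying these facets, since strict inequalities are open and each line's $d$ points remain affinely spanning a hyperplane only if we perturb within it. So I would perturb the global parameters instead of the individual points.
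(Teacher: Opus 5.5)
There is a genuine gap. You never produce the point set, and the step you defer as the ``local sign lemma'' cannot hold in the framework you pose it in. Take $p(w)=\sum_i \varepsilon^i u(w_i)$, or more generally any encoding $p(w)=\sum_i u_i(w_i)$, however nonlinear the $u_i$. Then every affine $g$ gives an additive function $g(p(w))=\beta+\sum_i\alpha_i(w_i)$. Take a line $L_\tau$ whose star set $S$ has $|S|\ge 2$, and let $C=\{w : w_i=\tau_i \text{ for } i\notin S\}$. The uniform average of an additive function over $C$ equals its average over the diagonal $L_\tau$; both equal $\beta'+\sum_{i\in S}\frac1d\sum_k\alpha_i(k)$. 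So if $g\circ p$ vanishes on $L_\tau$ and is nonnegative elsewhere, it vanishes on all $d^{|S|}>d$ points of $C$. Hence no choice of $u$ or of the scales $\varepsilon^i$ makes such a line the vertex set of a facet. In particular, the ``mixed breaks among $*$ positions'' you single out have no definite sign, and your reduction to one functional on $u([d])$ per local type is exactly this additive setting. Separately, placing the $d$ points of a line on a conic inside a $2$-plane of the final space would make them affinely dependent for $d\ge 4$.

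Your mention of a strictly convex lifting and of conics points the right way, but the nonlinearity must be applied \emph{after} summing, so that cross terms $\sigma_i\sigma_j$ between positions appear. This is the paper's route:
\begin{enumerate}
\item Encode linearly in the plane, $p_\sigma=\sum_i\sigma_iv_i$, so each line $L$ lies on $y=a_Lx+b_L$, which generically contains no other points.
\item Apply the Veronese map $\nu(x,y)=(x^2,xy,y^2,x,y)$. Under it, $(y-a_Lx-b_L)^2$ becomes an affine functional vanishing exactly on $\nu(P_L)$ and positive on the rest. This handles all lines at once, with no lexicographic analysis.
\item Since $\nu(P_L)$ spans only a $2$-plane, apply the shear $(x,y)\mapsto(x,y+\sqrt{\epsilon x})$. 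It moves each line onto the nondegenerate parabola $(y-a_Lx-b_L)^2=\epsilon x$. Any $5$ of its points determine that conic, so their images are affinely independent and span the supporting hyperplane, which still supports for small $\epsilon$.
\item For $d>5$, append $d-5$ generic coordinates. The supporting functional composed with the projection is unchanged, and generic values make each line's $d$ points affinely independent, since they already affinely span a hyperplane of $\RR^5$.
\end{enumerate}
Finally, your worry about the last perturbation is unfounded. Once each line's $d$ points are affinely independent with all other points strictly on one side, these are open conditions, so a small generic perturbation of the individual points is fine.
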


\subsection{Proof of Theorem \ref{thm:realization_polytope}}
We prove Theorem \ref{thm:realization_polytope} in several steps.

\medskip 

\textbf{1. Drawing $\HJ(d,n)$ in the plane.} Choose vectors $v_1, \dots, v_n$ in $\RR^2$. We may assume that
\begin{enumerate}
      [label=\normalfont(\alph*)]
    \item  the $x$-component of $v_i$ is positive for every $i \in [n]$. \label{tag:x_positive} 
  \end{enumerate}
For every $\sigma=(\sigma_1, \dots, \sigma_n) \in \RR^n$, define a point
\[p_\sigma= \sum_{i=1}^n \sigma_i v_i.\]
By perturbation of the vectors $v_1, \dots, v_n$ if necessary, we may assume that $p_\sigma$ are all distinct for $\sigma \in [d]^n$. Let
\[P=\{p_\sigma: \sigma \in [d]^n\}.\]
By \ref{tag:x_positive}, the $x$-component of $p_\sigma$ is positive for every $\sigma \in [d]^n$. 

For a combinatorial line $L=L_\tau$, 
let 
\[
P_L = \{p_\sigma : \sigma\in L\},
\]
and let $S_L$ be the affine span of $P_L$. 
Equivalently, 
\[
 S_L = \left\{ p_{\sigma(\tau,t)} : t \in \RR  \right\}
\]
with $\sigma(\tau,t) \in \RR^n$ as in the definition of combinatorial line above. 
By \ref{tag:x_positive}, $S_L$ has a finite slope. Let 
\[y = a_Lx+b_L\] 
be the equation defining $S_L$. 
By perturbation of $v_1, \dots, v_n$, we may further assume that 
no other points of $P$ are collinear with the points of $P_L$; 
that is
\begin{enumerate}      [label=\normalfont(\alph*)]
    \setcounter{enumi}{1}
    \item $S_L \cap P=P_L$. \label{tag:l_int_V=L}
\end{enumerate}

\smallskip 

\textbf{2. Veronese embedding.} Define the affine version of the \textit{Veronese embedding} $\nu:\RR^2 \to \RR^5$ as
\[\nu(x,y)=(x^2,xy,y^2,x,y).\]

\begin{lemma}\label{lemmaConvex}
$\nu(P)$ is in convex position.
\end{lemma}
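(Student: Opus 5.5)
Since $\nu$ is injective, it suffices to exhibit, for each point $q\in P$, an affine functional on $\RR^5$ that vanishes at $\nu(q)$ and is strictly positive at $\nu(p)$ for every other $p\in P$. Then $\nu(q)$ is a vertex of $\conv(\nu(P))$ for every $q$, which is exactly convex position.

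The key observation is that affine functionals on $\RR^5$ pulled back along $\nu$ are exactly the polynomials of degree at most $2$ in $(x,y)$. Indeed, for $\ell(z_1,\dots,z_5)=c_1z_1+\dots+c_5z_5+c_0$ we have
\[\ell(\nu(x,y)) = c_1x^2+c_2xy+c_3y^2+c_4x+c_5y+c_0,\]
and every quadratic polynomial in $x,y$ arises this way. Given $q=(q_x,q_y)\in P$, we therefore take
\[f_q(x,y)=(x-q_x)^2+(y-q_y)^2,\]
which corresponds to an affine functional $\ell_q$ with $\ell_q\circ\nu=f_q$. Then $\ell_q(\nu(q))=0$, and for every $p\in P$ with $p\neq q$ we have $\ell_q(\nu(p))=\|p-q\|^2>0$. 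The points of $P$ are distinct, as arranged in step 1 by perturbation.

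Hence the hyperplane $\{\ell_q=0\}$ supports $\conv(\nu(P))$ exactly at $\nu(q)$, so $\nu(q)$ is not in the convex hull of the remaining points. Equivalently, if $\nu(q)=\sum\lambda_p\nu(p)$ were a convex combination of the other points, applying $\ell_q$ would give $0$ on the left and a strictly positive value on the right, a contradiction.

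No step here is a real obstacle; the only content is recognizing that the Veronese map linearizes quadrics and that a squared distance is a quadric. Condition \ref{tag:x_positive} is not needed for this lemma; it will matter later, presumably for the facet construction.
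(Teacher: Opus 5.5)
Your proposal is correct and follows the paper's own proof: for each point $q \in P$ it uses the affine functional on $\RR^5$ whose pullback along $\nu$ is the squared distance to $q$. That functional vanishes at $\nu(q)$ and is positive on the rest of $\nu(P)$, so $\nu(q)$ is a vertex of $\conv(\nu(P))$.
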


\begin{proof}
For $p \in P$, let $g_p$ denote the affine function from $\RR^5$ to $\RR$ with constant term $\|p\|^2$ and the same coefficients as the polynomial defined by the squared distance from $p$. 
That is, $g_p(x^2,xy,y^2,x,y) = \|(x,y)-p\|^2$.
Then, $g_p$ vanishes on $\nu(p)$ and is positive on the rest of $\nu(P)$. 
Hence, $p$ is a vertex of $\conv(\nu(P))$. 
\end{proof}

\begin{lemma}\label{lemma:line_support_bound}
For each combinatorial line $L$ of $\HJ(d,n)$, we can find an affine function $g_L$ from $\RR^5$ to $\RR$
that vanishes on $\nu(P_L)$ and is positive on $\nu(P\setminus P_L)$. 
Hence, $\conv(\nu(P_L))$ is a face of $\conv(\nu(P))$. 
\end{lemma}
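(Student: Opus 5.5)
The plan is to build $g_L$ by pulling back a conic through the Veronese map, the same device used in Lemma~\ref{lemmaConvex}. Every affine function $g:\RR^5\to\RR$ pulls back under $\nu$ to a polynomial $g\circ\nu$ of degree at most $2$ in $(x,y)$. Conversely, every polynomial of degree at most $2$ equals $g\circ\nu$ for exactly one affine $g$: the coefficients of $x^2, xy, y^2, x, y$ become the linear coefficients of $g$, and the constant term stays the constant term. So it suffices to find a quadratic polynomial $q_L(x,y)$ that vanishes on $P_L$ and is positive on $P\setminus P_L$, and then set $g_L=\,$the affine function with $g_L\circ\nu=q_L$.

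For $q_L$ I will take the square of the defining equation of the line $S_L$:
\[
q_L(x,y)=(y-a_Lx-b_L)^2 .
\]
This is a polynomial of degree $2$, and its zero set in $\RR^2$ is exactly $S_L$.
\begin{itemize}
\item Since $P_L\subseteq S_L$, $q_L$ vanishes on $P_L$, so $g_L$ vanishes on $\nu(P_L)$.
\item By assumption~\ref{tag:l_int_V=L}, $S_L\cap P=P_L$, so every point of $P\setminus P_L$ lies off $S_L$. There $q_L$ is a nonzero square, hence positive, so $g_L$ is positive on $\nu(P\setminus P_L)$.
\end{itemize}
Concretely, expanding $q_L$ gives
\[
g_L(z_1,\dots,z_5)=a_L^2z_1-2a_Lz_2+z_3+2a_Lb_Lz_4-2b_Lz_5+b_L^2 .
\]

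For the face statement: $g_L\ge 0$ on all of $\nu(P)$, so by convexity $g_L\ge 0$ on $\conv(\nu(P))$. Hence $\{g_L=0\}$ is a supporting hyperplane of $\conv(\nu(P))$. Its intersection with $\conv(\nu(P))$ is the convex hull of the points of $\nu(P)$ where $g_L$ vanishes, because any convex combination that assigns positive weight to a point with $g_L>0$ has $g_L>0$. That intersection is therefore $\conv(\nu(P_L))$, which is a face.

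There is no real obstacle. The only points needing care are that the squared linear form has the correct sign on every point off the line, which is immediate, and that assumption~\ref{tag:l_int_V=L} is exactly what makes the inequality strict. I would also note that the face is not yet known to be a facet or a simplex. Upgrading it, for instance by lifting to higher dimension or perturbing so that $\nu(P_L)$ spans a $4$-simplex, belongs to later steps, not to this lemma.
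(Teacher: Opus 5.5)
Your proposal is correct and follows the same route as the paper: pull back the squared line equation $(y-a_Lx-b_L)^2$ through $\nu$ to an affine function, and use assumption~\ref{tag:l_int_V=L} to get strict positivity off $P_L$. Your explicit coefficient expansion and the supporting-hyperplane justification of the face claim are correct details that the paper leaves implicit.
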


\begin{proof}
Let $f_L(x,y)=(y-a_Lx-b_L)^2$, and 
let $g_L$ be the affine function with the same coefficients and constant as $f_L$.
That is, $g_L(x^2,xy,y^2,x,y) = f_L(x,y)$. 
Then, $g_L(\nu(p))=f_L(p)=0$ for every point $p\in P_L$,  
and $g_L(\nu(p))=f_L(p)>0$ for every $p \in P \setminus P_L$
by our assumption \ref{tag:l_int_V=L}. 
\end{proof}

\smallskip

\textbf{3. Making combinatorial lines into facets in $\RR^5$.} 
We make a suitable perturbation $P^\epsilon$ of $P$ so that $\conv(\nu(P_L^\epsilon))$ is a facet. 
For $\epsilon>0$, let $P^\epsilon$ be the result of translating each point of $P$ upward by a distance proporational to the square root of the $x$-coordinate, 
\[P^\epsilon = \left\{  \phi_\epsilon(p) : p \in P  \right\}
\quad \text{with} \quad 
\phi_\epsilon(x,y) = (x, y+\sqrt{\epsilon x}).
\]

\definecolor{ccqqqq}{rgb}{1,0,0}
\definecolor{rvwvcq}{rgb}{0.08235294117647059,0.396078431372549,0.7529411764705882}
\definecolor{qqwxvy}{rgb}{0,0.403921568627451,0.34509803921568627}
\definecolor{wewdxt}{rgb}{0.43137254901960786,0.42745098039215684,0.45098039215686275}
\definecolor{wvvxds}{rgb}{0.396078431372549,0.3411764705882353,0.8235294117647058}
\definecolor{ffqqtt}{rgb}{1,0,0.2}
\definecolor{cxvqqq}{rgb}{0.7803921568627451,0.3137254901960784,0}

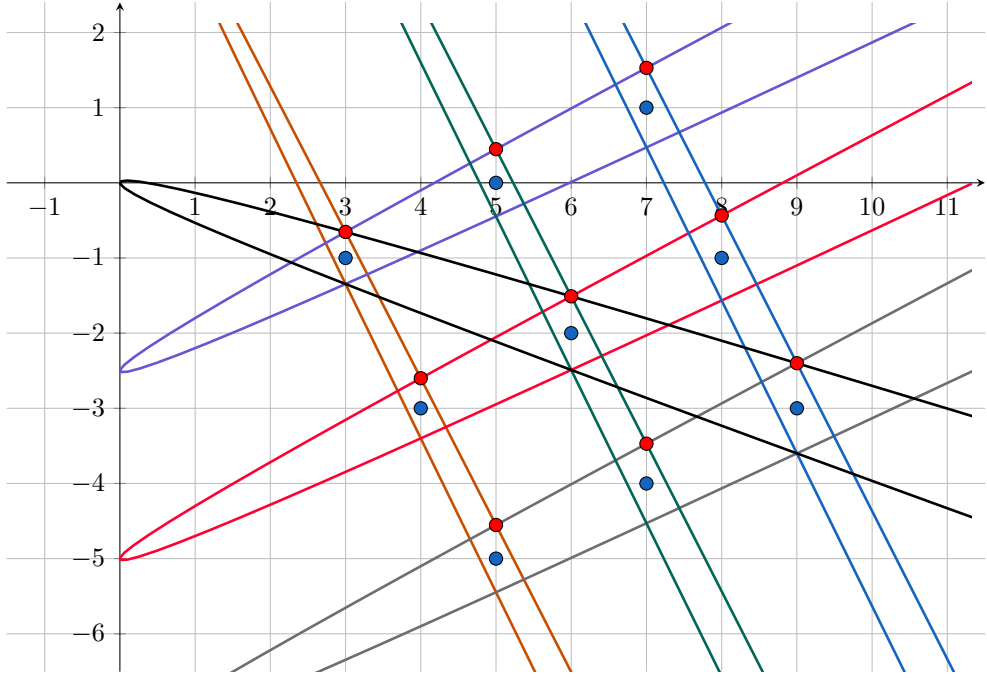
\begin{figure}[ht]
\centering
\begin{tikzpicture}[line cap=round,line join=round,>=triangle 45,x=1cm,y=1cm]
\begin{axis}[
x=1cm,y=1cm,
axis lines=middle,
ymajorgrids=true,
xmajorgrids=true,
xmin=-1.5,
xmax=11.5,
ymin=-6.5,
ymax=2.4,
xtick={-1,0,...,11},
ytick={-6,-5,...,2},]
\clip(-3.0264529691201787,-6.583060220621225) rectangle (11.326946962807709,2.114284182575456);
\draw [samples=50,rotate around={-153.43494882292202:(0.0016,5.0048)},xshift=0.0016cm,yshift=5.0048cm,line width=1pt,color=cxvqqq,domain=-0.2897944098839684:0.2897944098839684)] plot (\x,{(\x)^2/2/0.001788854381999805});
\draw [samples=50,rotate around={-63.43494882292201:(0.0016,-5.0072)},xshift=0.0016cm,yshift=-5.0072cm,line width=1pt,color=ffqqtt,domain=-0.8300284332479095:0.8300284332479095)] plot (\x,{(\x)^2/2/0.01431083505599844});
\draw [samples=50,rotate around={-63.43494882292201:(0.0016,-2.5072)},xshift=0.0016cm,yshift=-2.5072cm,line width=1pt,color=wvvxds,domain=-0.8300284332479198:0.8300284332479198)] plot (\x,{(\x)^2/2/0.014310835055998616});
\draw [samples=50,rotate around={-63.43494882292201:(0.0016000000005798043,-7.50719999999971)},xshift=0.0016000000005798043cm,yshift=-7.50719999999971cm,line width=1pt,color=wewdxt,domain=-0.83002843324793:0.83002843324793)] plot (\x,{(\x)^2/2/0.014310835055998793});
\draw [samples=50,rotate around={-153.43494882292202:(0.0015999999997591274,10.00480000000048)},xshift=0.0015999999997591274cm,yshift=10.00480000000048cm,line width=1pt,color=qqwxvy,domain=-0.34703775010792764:0.34703775010792764)] plot (\x,{(\x)^2/2/0.0017888543819996271});
\draw [samples=50,rotate around={-153.43494882292202:(0.0016000000007654336,15.004799999998468)},xshift=0.0016000000007654336cm,yshift=15.004799999998468cm,line width=1pt,color=rvwvcq,domain=-0.39354796403999615:0.39354796403999615)] plot (\x,{(\x)^2/2/0.0017888543819999825});
\draw [samples=50,rotate around={-108.43494882292201:(0.0009,0.0057)},xshift=0.0009cm,yshift=0.0057cm,line width=1pt,domain=-0.8879675669752809:0.8879675669752809)] plot (\x,{(\x)^2/2/0.01707629936490925});
\begin{scriptsize}
\draw[color=cxvqqq] (0.23736923762311357,5.188025834664687);
\draw[color=ffqqtt] (0.16319146019712966,-5.020690783586427);
\draw[color=wvvxds] (0.16319146019712966,-2.5264630176376985);
\draw[color=wewdxt] (0.16319146019712966,-7.524190771713403);
\draw[color=qqwxvy] (0.20028034891012161,10.185753588740392);
\draw[color=rvwvcq] (0.23736923762311357,15.183481342816096);
\draw [fill=rvwvcq] (3,-1) circle (2.5pt);
\draw[color=rvwvcq] (3.0746692241669984,-0.8018296924835591);
\draw [fill=rvwvcq] (5,0) circle (2.5pt);
\draw[color=rvwvcq] (5.077469214668564,0.19957030276723156);
\draw [fill=rvwvcq] (7,1) circle (2.5pt);
\draw[color=rvwvcq] (7.070996982991882,1.2009702980180221);
\draw [fill=rvwvcq] (6,-2) circle (2.5pt);
\draw[color=rvwvcq] (6.0695969877410985,-1.8032296877343494);
\draw [fill=rvwvcq] (8,-1) circle (2.5pt);
\draw[color=rvwvcq] (8.072396978242665,-0.8018296924835591);
\draw [fill=rvwvcq] (5,-5) circle (2.5pt);
\draw[color=rvwvcq] (5.077469214668564,-4.798157451308474);
\draw [fill=rvwvcq] (7,-4) circle (2.5pt);
\draw[color=rvwvcq] (7.070996982991882,-3.7967574560576827);
\draw [fill=rvwvcq] (9,-3) circle (2.5pt);
\draw[color=rvwvcq] (9.073796973493447,-2.8046296829851403);
\draw [fill=rvwvcq] (4,-3) circle (2.5pt);
\draw[color=rvwvcq] (4.076069219417781,-2.8046296829851403);
\draw[color=black] (0.19100812673187362,0.24593141365847185);
\draw [fill=ccqqqq] (3,-0.6535898384862175) circle (2.5pt);
\draw[color=ccqqqq] (3.0746692241669984,-0.45875747188838073);
\draw [fill=ccqqqq] (5.000000000006366,0.4472135955034251) circle (2.5pt);
\draw[color=ccqqqq] (5.077469214668564,0.6446369673231385);
\draw [fill=ccqqqq] (7.000000000001204,1.529150262213568) circle (2.5pt);
\draw[color=ccqqqq] (7.070996982991882,1.7294869621781617);
\draw [fill=ccqqqq] (4,-2.6) circle (2.5pt);
\draw[color=ccqqqq] (4.076069219417781,-2.3966519071422256);
\draw [fill=ccqqqq] (6,-1.5101020514433705) circle (2.5pt);
\draw[color=ccqqqq] (6.0695969877410985,-1.3118019122872022);
\draw [fill=ccqqqq] (8.000000000003226,-0.4343145750490342) circle (2.5pt);
\draw[color=ccqqqq] (8.072396978242665,-0.23622413961042726);
\draw [fill=ccqqqq] (9,-2.4) circle (2.5pt);
\draw[color=ccqqqq] (9.073796973493447,-2.201935241399016);
\draw [fill=ccqqqq] (7.0000000000072236,-3.4708497377832024) circle (2.5pt);
\draw[color=ccqqqq] (7.070996982991882,-3.268240791897543);
\draw [fill=ccqqqq] (4.999999999999837,-4.552786404499722) circle (2.5pt);
\draw[color=ccqqqq] (5.077469214668564,-4.353090786752567);
\end{scriptsize}
\end{axis}
\end{tikzpicture}
\caption{A drawing of the vertices of $\HJ(3,2)$ 
and the curve $Z_L^\epsilon$ for each combinatorial line $L$ of $\HJ(3,2)$. The blue dots are points of $P$ and the red dots are points of $P^\epsilon$.}
   \label{fig:perturbation1}
\end{figure}

An important consequence of this perturbation, is that points 
$P^\epsilon_L = \{\phi_\epsilon(p) : p \in P_L\}$ now lie on a unique conic for each combinatorial line $L$; see Figure \ref{fig:perturbation1}. 
Specifically, 
let \[f_L^\epsilon(x,y):=f_L(x,y)-\epsilon x=(y-a_Lx-b_L)^2-\epsilon x,\]
and let $Z_L^\epsilon = [f_L^\epsilon]^{-1}(0)$ be the zero set of $f_L^\epsilon$.  
Observe that $Z_L^\epsilon$ is a parabola, and appears only in the region $x\geq 0$. 

\begin{lemma}\label{lemmaPZ}
$P^\epsilon_L \subset Z^\epsilon_L$.
\end{lemma}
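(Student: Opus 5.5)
The plan is a direct substitution. Take an arbitrary point of $P^\epsilon_L$; by definition it is $\phi_\epsilon(p)$ for some $p\in P_L$. We then evaluate $f^\epsilon_L$ at $\phi_\epsilon(p)$ and check that the value is zero.

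Write $p=(x,y)$. Since $p\in P_L\subseteq S_L$, the point lies on the line $y=a_Lx+b_L$, so $y-a_Lx-b_L=0$. By property \ref{tag:x_positive}, the $x$-coordinate of every point of $P$ is positive. Hence $\sqrt{\epsilon x}$ is a well-defined nonnegative real number and $\phi_\epsilon(p)=(x,\,y+\sqrt{\epsilon x})$ makes sense.

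Now compute
\[
f^\epsilon_L(\phi_\epsilon(p)) = \bigl(y+\sqrt{\epsilon x}-a_Lx-b_L\bigr)^2-\epsilon x = \bigl(\sqrt{\epsilon x}\bigr)^2-\epsilon x = 0 .
\]
The middle equality uses $y-a_Lx-b_L=0$. The last equality uses $x\ge 0$, which gives $(\sqrt{\epsilon x})^2=\epsilon x$. Therefore $\phi_\epsilon(p)\in Z^\epsilon_L$, and since $p\in P_L$ was arbitrary, $P^\epsilon_L\subset Z^\epsilon_L$.

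There is no real obstacle in this argument. The only point needing care is that the square root is real and squares back to $\epsilon x$, and this is exactly what assumption \ref{tag:x_positive} provides.
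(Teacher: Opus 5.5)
Your proof is correct and matches the paper's argument: you take a point of $P^\epsilon_L$, use that its preimage lies on $S_L$, where $y=a_Lx+b_L$, and substitute into $f^\epsilon_L$, relying on \ref{tag:x_positive} so that $(\sqrt{\epsilon x})^2=\epsilon x$. The only difference is bookkeeping: you parametrize the point via its preimage $p\in P_L$, while the paper writes the perturbed coordinates directly.
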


\begin{proof}
If $p = (x,y) \in P^\epsilon_L$, then $y = a_Lx+b_L +\sqrt{\epsilon x}$ and $x > 0$ by \ref{tag:x_positive}, 
so $f_L^\epsilon(p) = \sqrt{\epsilon x}^2 -\epsilon x = 0$. 
\end{proof}

Let $g^\epsilon_L$ be the affine function with the same coefficients and constant as $f^\epsilon_L$
so that $g^\epsilon_L(x^2,xy,y^2,x,y) = f^\epsilon_L(x,y)$, 
and let us choose $\epsilon$ according to the following lemma. 

\begin{lemma} \label{lemma:parabola_support_bound}
   We can find a sufficiently small $\epsilon>0$ so that the affine function
   $g^\epsilon_L$ vanishes on $\nu(P_L^\epsilon)$ and is positive on $\nu(P^\epsilon\setminus P_L^\epsilon)$.
\end{lemma}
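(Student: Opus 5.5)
The plan is to evaluate $g^\epsilon_L$ on the perturbed points directly and compare with the unperturbed support function $g_L$ from Lemma~\ref{lemma:line_support_bound}. Vanishing on $\nu(P^\epsilon_L)$ is immediate. By construction $g^\epsilon_L(\nu(q)) = f^\epsilon_L(q)$ for every $q\in\RR^2$, and Lemma~\ref{lemmaPZ} gives $f^\epsilon_L(q)=0$ for $q\in P^\epsilon_L$. So the whole content is positivity on $\nu(P^\epsilon\setminus P^\epsilon_L)$, uniformly over all combinatorial lines $L$ by a single choice of $\epsilon$.

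For positivity, take $p=(x,y)\in P\setminus P_L$ and write $\phi_\epsilon(p)=(x,y+\sqrt{\epsilon x})$. Then
\[
f^\epsilon_L(\phi_\epsilon(p)) = \bigl(y-a_Lx-b_L+\sqrt{\epsilon x}\bigr)^2-\epsilon x ,
\]
which is a continuous function of $\epsilon\ge 0$ because $x>0$ by \ref{tag:x_positive}. At $\epsilon=0$ it equals $f_L(p)=(y-a_Lx-b_L)^2$, and this is strictly positive by \ref{tag:l_int_V=L}. Expanding gives an explicit form:
\[
f^\epsilon_L(\phi_\epsilon(p)) = (y-a_Lx-b_L)^2 + 2(y-a_Lx-b_L)\sqrt{\epsilon x}.
\]
This is positive whenever $2\sqrt{\epsilon x}<|y-a_Lx-b_L|$, and in any case whenever $y-a_Lx-b_L>0$.

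To choose one $\epsilon$, set $\delta=\min |y-a_Lx-b_L|>0$, the minimum taken over the finitely many pairs $(L,p)$ with $p\in P\setminus P_L$. Set $X=\max\{x:(x,y)\in P\}$. Any $\epsilon$ with $0<\epsilon<\delta^2/(4X)$ then works simultaneously for all $L$ and all $p$, since $\sqrt{\epsilon x}\le\sqrt{\epsilon X}<\delta/2$. We also need $\phi_\epsilon$ to be injective on $P$; it is, because it preserves $x$ and shifts $y$ by an amount depending only on $x$. Hence $P^\epsilon\setminus P^\epsilon_L=\phi_\epsilon(P\setminus P_L)$.

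The only step needing care is this uniformity. It relies on finiteness of $P$ and of the set of lines, together with the strict positivity coming from the general position assumption \ref{tag:l_int_V=L}. Once that is in place there is no real obstacle, and the lemma reduces to the two-line expansion above.
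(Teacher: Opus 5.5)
Your proof is correct and follows the same route as the paper: vanishing comes from Lemma~\ref{lemmaPZ}, and positivity comes from treating $g^\epsilon_L$ on $\nu(P^\epsilon)$ as a small perturbation of the strictly positive values $f_L(p)$ guaranteed by \ref{tag:l_int_V=L}, with finiteness giving one $\epsilon$ for all lines. Your expansion $f^\epsilon_L(\phi_\epsilon(p)) = D^2 + 2D\sqrt{\epsilon x}$, with $D = y-a_Lx-b_L$, just makes the paper's continuity argument quantitative, giving the explicit threshold $\epsilon<\delta^2/(4X)$.
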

\begin{proof}
Observe that $g^\epsilon_L$ vanishes on $\nu(Z^\epsilon_L)$ by definition, 
so in particular $g^\epsilon_L$ vanishes on $\nu(P^\epsilon_L)$ by Lemma \ref{lemmaPZ}.
Since $P$ is finite, the largest $x$-coordinate of $P$ is bounded, so we can make $\nu(P^\epsilon)$ arbitrarily close to $\nu(P)$ by choosing $\epsilon$ small enough, and we can make $f_L^\epsilon$ arbitrarily close to $f_L = f^0_L$. 
Hence, $g^\epsilon_L$ is positive on the rest of $P^\epsilon$ for $\epsilon$ sufficiently small by Lemma \ref{lemma:line_support_bound}. 
\end{proof}

\begin{lemma}\label{lemma_general_position_R5}
For each combinatorial line $L$, any 5 points of $\nu(P_L^\epsilon)$ are affinely independent.    
Hence, $\conv(\nu(P_L^\epsilon))$ is a facet of $\conv(\nu(P^\epsilon))$. 
\end{lemma}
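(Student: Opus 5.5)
Our plan is to show that the affine independence of five points of $\nu(P_L^\epsilon)$ is the same as the five points in the plane not lying on a common conic, and then use that they already lie on the parabola $Z_L^\epsilon$.

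\textbf{Conic reformulation.} Affine functions on $\RR^5$ pulled back by $\nu$ are exactly the polynomials of degree at most $2$ in $(x,y)$, i.e.\ the (possibly degenerate) conic equations $\alpha x^2+\beta xy+\gamma y^2+\delta x+\eta y+\zeta$. Take five points $q_1,\dots,q_5\in P_L^\epsilon$. Then $\nu(q_1),\dots,\nu(q_5)$ are affinely dependent if and only if they lie in a common affine hyperplane of $\RR^5$. Such a hyperplane is the zero set of a nonconstant affine function, which pulls back to a nonzero polynomial $h$ of degree at most $2$ vanishing on $q_1,\dots,q_5$. So it suffices to show that any nonzero $h$ of degree at most $2$ vanishing at these five points is a scalar multiple of $f_L^\epsilon$. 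If so, the hyperplane is $\{g_L^\epsilon=0\}$, so we must additionally rule out that hyperplane containing only four independent directions. More cleanly, we show that the space of degree-$\le 2$ polynomials vanishing on $q_1,\dots,q_5$ is exactly one-dimensional. By the dimension count ($6$ coefficients minus $5$ independent conditions), this is equivalent to the five evaluation functionals being independent, i.e.\ to affine independence.

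\textbf{Key step: uniqueness of the conic.} The conic $Z_L^\epsilon$ is irreducible for $\epsilon>0$. Indeed, $f_L^\epsilon=u^2-\epsilon x$ with $u=y-a_Lx-b_L$, and in coordinates $(x,u)$ this is the nondegenerate parabola $x=u^2/\epsilon$, which is not a union of lines. Suppose $h\ne 0$ has degree at most $2$, vanishes at the five points, and is not proportional to $f_L^\epsilon$. By Bézout's theorem, two conics without a common component meet in at most $4$ points. A common component would have to be all of $Z_L^\epsilon$, since $Z_L^\epsilon$ is irreducible, forcing $h$ to be proportional to $f_L^\epsilon$. Hence the two curves share no component, contradicting the five common points. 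To keep this elementary, we can avoid Bézout: parametrize $Z_L^\epsilon$ by $u\mapsto (u^2/\epsilon,\,u+a_Lu^2/\epsilon+b_L)$, substitute into $h$, and obtain a polynomial in $u$ of degree at most $4$ with five roots. The five points have distinct $u$-values, since $u=\sqrt{\epsilon x}$ and the $x$-coordinates of the collinear points $P_L$ are distinct by (a) and distinctness of points. So this polynomial vanishes identically. Checking that identical vanishing forces $h\in\RR f_L^\epsilon$ is a short coefficient comparison, and that comparison is the main computational step.

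\textbf{Facet conclusion.} By Lemma~\ref{lemma:parabola_support_bound}, $\conv(\nu(P_L^\epsilon))$ is a face cut out by $g_L^\epsilon$. Since $|P_L^\epsilon|=d\ge 5$ and it contains five affinely independent points, the face has dimension $4$, so it is a facet of the $5$-polytope. The polytope is full-dimensional for the same reason.
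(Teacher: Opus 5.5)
Your argument is correct and is essentially the paper's: uniqueness of the conic $Z_L^\epsilon$ through the five points means the affine functions vanishing on their $\nu$-images form a one-dimensional space (a unique hyperplane), which is equivalent to affine independence, and the facet claim then follows from Lemma~\ref{lemma:parabola_support_bound}. The differences are minor. You justify uniqueness of the conic by irreducibility and B\'ezout, or by the parametrization $x=u^2/\epsilon$ and a coefficient comparison, which does go through and forces $h=\gamma(u^2-\epsilon x)$, rather than by the classical ``no three collinear'' fact. Also, your opening claim that five points of $\RR^5$ are affinely dependent iff they lie on a common hyperplane is false as stated, since they always do, but you immediately replace it with the correct one-dimensionality criterion.
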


\begin{proof}
Since $P_L^\epsilon$ lies on the parabola $Z^\epsilon_L$, no 3 points of $P_L^\epsilon$ are collinear. Hence, any 5 points of $P_L^\epsilon$ determine a unique conic, which is $Z^\epsilon_L$. 
This implies that the image of these 5 points by $\nu$ lie on a unique hyperplane $h^\epsilon_L=[g^\epsilon_L]^{-1}(0)$, and so the affine span of the 5 points must be the hyperplane $h^\epsilon_L$. 
Thus, the 5 points are affinely independent. 
The second part of the lemma now follows by Lemma \ref{lemma:parabola_support_bound}.
\end{proof}

\textbf{4. Making combinatorial lines into facets in $\RR^d$.} 
Suppose $d>5$.
We will perturb points orthogonally to $\RR^5$ as a subspace of $\RR^d$. 
For each $p_{\sigma}^\epsilon = \phi_\epsilon(p_\sigma) \in P^\epsilon$, 
let 
\[
p_\sigma^\mr{z} = (\nu(p_{\sigma}^\epsilon),z_{\sigma,1},\dots,z_{\sigma,d-5}) \in \RR^d, \quad 
P^\mr{z} = \{ p_\sigma^\mr{z} : \sigma \in [d]^n\}, \quad 
P^\mr{z}_L = \{ p_\sigma^\mr{z} : \sigma \in L\} 
\]
where the $z_{\sigma,j}$ are chosen according to the following lemma.

\begin{lemma}
We can choose $z_{\sigma,j}$ such that the points of $P_L^\mr{z}$ are affinely independent for each combinatorial line $L$.
Moreover, $\conv(P_L^\mr{z})$ is a facet of $\conv(P^\mr{z})$ with this choice.
\end{lemma}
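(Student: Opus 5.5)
We need $|L| = d$ points in $\RR^d$, affinely independent, spanning a hyperplane that supports $\conv(P^\mr{z})$ exactly along $P^\mr{z}_L$.

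\textbf{Affine independence.} Let $N=d^n$ and view the full vector $z=(z_{\sigma,j}) \in \RR^{N(d-5)}$ as a parameter. Fix a combinatorial line $L=\{\sigma^1,\dots,\sigma^d\}$. By Lemma~\ref{lemma_general_position_R5}, the points $\nu(p^\epsilon_{\sigma^1}),\dots,\nu(p^\epsilon_{\sigma^5})$ are affinely independent in $\RR^5$. Form the $(d-1)\times d$ matrix whose rows are the differences $p^\mr{z}_{\sigma^k}-p^\mr{z}_{\sigma^1}$ for $k=2,\dots,d$. Some $(d-1)$-minor is a polynomial in $z$, and it is not identically zero. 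Indeed, choose $z_{\sigma^k}=0$ for $k\le 5$ and $z_{\sigma^{5+j}}=e_j$ (the $j$-th standard basis vector of $\RR^{d-5}$) for $j=1,\dots,d-5$. Then the rows are block triangular: the first four rows span $\RR^5\times 0$ with rank 4, and the remaining rows contribute the identity in the last $d-5$ coordinates. Hence the rank is $d-1$ and the points are affinely independent. Since a nonzero polynomial is nonzero on an open dense set, and there are finitely many lines, a generic $z$ works for all $L$ simultaneously.

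\textbf{Supporting hyperplanes.} We want an affine function $G_L$ on $\RR^d$ that vanishes on $P^\mr{z}_L$ and is positive on the rest of $P^\mr{z}$. First scale: replace $z$ by $tz$ with $t>0$ small. Scaling preserves affine independence, since the minor is homogeneous in $t$ after factoring, or more simply because genericity is preserved along a ray for all but finitely many $t$. The $d$ affinely independent points $P^\mr{z}_L$ span a unique hyperplane $H_L^t$. As $t\to 0$, we expect $H_L^t$ to converge to $h^\epsilon_L\times\RR^{d-5}$, the hyperplane defined by $g^\epsilon_L$ extended trivially, because the points converge to $\nu(P^\epsilon_L)\times 0$. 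To make this precise, write $G_L^t(w,u)=g^\epsilon_L(w)+\langle c_t,u\rangle+e_t$ and solve the linear conditions $G_L^t(p^\mr{z}_\sigma)=0$ for $\sigma\in L$.

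\textbf{Main obstacle.} The hard step is showing that $c_t$ and $e_t$ stay bounded, so that $tc_t$ and $e_t$ tend to 0. The conditions read $t\langle c_t,z_\sigma\rangle+e_t=-g^\epsilon_L(\nu(p^\epsilon_\sigma))=0$ for all $\sigma\in L$. So $(tc_t,e_t)$ solves a homogeneous system, and by choosing the normalization $G$ of the form above we may even take $c_t=0$ and $e_t=0$. Then $G_L=g^\epsilon_L$ (independent of $u$) already vanishes on $P^\mr{z}_L$ and, by Lemma~\ref{lemma:parabola_support_bound}, is positive on the remaining points, since it ignores the $z$-coordinates. This settles the step without any limit argument and gives the face property for every choice of $z$.

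\textbf{Conclusion.} With generic $z$, the face $\conv(P^\mr{z}_L)$ is cut out by the hyperplane $\{G_L=0\}$. This face is a $(d-1)$-simplex by affine independence, hence a facet. To finish, we note two further points. $P^\mr{z}$ is in convex position by Lemma~\ref{lemmaConvex}, via the lift $g_p\circ\mathrm{pr}$ applied to $P^\epsilon$, which holds for small $\epsilon$ by the same perturbation argument. Finally, a further generic perturbation of the points off the lines' constraints makes $Q$ simplicial without destroying these finitely many strict facet conditions.
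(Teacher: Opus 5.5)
Your proposal is correct and follows essentially the paper's argument. For each line, affine independence is a polynomial condition in the $z_{\sigma,j}$ that does not vanish identically: you exhibit an explicit witness ($z_{\sigma^{5+j}}=e_j$), whereas the paper uses $\det(A_L^{\mathrm T}A_L)$ and only asserts that the extra entries can be chosen. A generic choice of $z$ therefore works for all lines, and the facet property comes from the trivially extended function $g^\epsilon_L\circ\pi$. The scaling-by-$t$ and limit discussion is an unnecessary detour, since, as you then observe yourself, $g^\epsilon_L\circ\pi$ works directly for every choice of $z$.
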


\begin{proof}
For each combinatorial line $L = \{\sigma_1,\dots,\sigma_d\}$, let 
\[A_L=\begin{pmatrix}
1 & 1 & \cdots & 1\\
p_{\sigma_1}^\mr{z} & p_{\sigma_2}^\mr{z} & \cdots & p_{\sigma_d}^\mr{z}\\
\end{pmatrix} \in \RR^{(d+1)\times d}.\]
Since the points $\nu(p_{\sigma_1}^\epsilon),\dots,\nu(p_{\sigma_d}^\epsilon)$ are contained in the unique hyperplane $h_L^\epsilon$,  the first 6 rows of $A_L$ form a submatrix of rank 5, 
so the remaining entries of $A_L$ can be chosen so that $A_L$ has full rank, 
which gives $\det(A_L^\mr{T}A_L) \neq 0$. 
Hence, $\det(A_L^\mr{T}A_L)$ is a non-zero polynomial in variables $z_{\sigma,j}$, 
so the zero set of $\det(A_L^\mr{T}A_L)$ has measure zero in $\RR^{d^n(d-5)}$.
Therefore, we may choose values for the $z_{\sigma,j}$ where these polynomials are non-zero for all combinatorial lines. 
Thus, $P_L^\mr{z}$ is affinely independent for each combinatorial line $L$.

For the second part of the lemma, 
let us extend $g_L^\epsilon$ to an affine function $g_L^\mr{z}: \RR^d \to \RR$ by coordinate projection.
That is, $g_L^\mr{z} = g_L^\epsilon\circ\pi$ where $\pi:\RR^d \to \RR^5$ is the projection to the first 5 coordinates.
Since the first 5 coordinates of $p_\sigma^\mr{z}$ are $\nu(p_\sigma^\epsilon)$, 
we have $g_L^\mr{z}(p_\sigma^\mr{z}) = g_L^\epsilon(\nu(p_\sigma^\epsilon))$, 
so $g_L^\mr{z}$ vanishes on $P_L^\mr{z}$ and is positive on the rest of $P^\mr{z}$. 
\end{proof}

\textbf{5. A generic perturbation.} 
Let $P^\mr{s}$ be a small enough generic perturbation of the point set $P^\mr{z}$ (with $P^\mr{z}=\nu(P^\epsilon)$ in the case of $d=5$) for the following three conditions. 
Observe that the points of $\nu(P^\epsilon) = \pi(P^\mr{z})$ are in convex position 
where $\pi:\RR^d \to \RR^5$ is the projection to the first 5 coordinates  
by the same argument as Lemma \ref{lemmaConvex}, 
so $P^\mr{z}$ is in convex position. 
First, choose a small enough perturbation so that $P^\mr{s}$ is in convex position.
Second, the set of points $P^\mr{s}_L$ corresponding to a combinatorial line $L\subset [d]^n$ remain affinely independent 
with the correspondence inherited from $P^\mr{z}$.
Third, an affine function $g_L$ that is slightly perturbed from $g_L^\mr{z}$ vanishes on $P^\mr{s}_L$ and is positive on the rest of $P^\mr{s}$. 
Hence, $Q_L = \conv(P^\mr{s}_L)$ is a simplicial facet of $Q = \conv(P^\mr{s})$. 
Furthermore, since the perturbation is chosen generically, $Q$ is simplicial.

This completes the proof of Theorem \ref{thm:realization_polytope}.  \qed
\section*{Acknowledgement}
We thank Andreas Holmsen for introducing Question \ref{question_main} and the surrounding property, Eran Nevo for his suggestion to consider embedding linear hypergraphs into $\RR^d$ and noticing Remark \ref{remark_eran} and Question \ref{question_eran}, 
Isabella Novik and Varun Shah for discussion on Question \ref{question_eran}, and Seonghyuk Im and Hong Liu for providing references regarding bounds on the density Hales-Jewett function.

\bibliographystyle{hplain}
\bibliography{bibliography}

\end{document}